\theoremstyle{plain}
\newtheorem{lemma}{Lemma}
\newtheorem{thm}{Theorem}
\theoremstyle{definition}
\newtheorem{defn}{Definition}
\newtheorem{rmk}{Remark}
\newtheorem{ex}{Example}
\newtheorem{conv}{Convention}
\renewcommand{\P}{\ensuremath{\mathsf{P}}}
\newcommand{\R}{\ensuremath{\mathbb{R}}}
\newcommand{\tr}{\ensuremath{\mathrm{tr}}}
\newcommand{\grad}{\ensuremath{\mathrm{grad}}}
\renewcommand{\div}{\ensuremath{\mathrm{div}}}
\title{Second-order superintegrable systems and Weylian geometry}
\author{
	\textsc{Andreas Vollmer}\\[.003cm]
	\textit{\scriptsize Universität Hamburg, Fachbereich Mathematik}\\[-.13cm]
	\textit{\scriptsize Bundesstr. 55, 20146 Hamburg, Germany}\\[-.071cm]
	\texttt{\small andreas.vollmer@uni-hamburg.de} }
\date{\today}
\begin{document}
	
	\maketitle
	
\begin{abstract}
	\noindent Abundant second-order maximally conformally superintegrable Hamiltonian systems are re-examined, revealing their underlying natural Weyl structure and offering a clearer geometric context for the study of Stäckel transformations (also known as coupling constant metamorphosis). This also allows us to naturally extend the concept of conformal superintegrability from the realm of conformal geometries to that of Weyl structures.
	It enables us to interpret superintegrable systems  of the above type as semi-Weyl structures, a concept related to statistical manifolds and affine hypersurface theory.\bigskip
	
	\noindent\textsc{MSC2020:} 
	53C18; 
	37J35, 
	70G45. 
	
	\noindent\textsc{Keywords:}
	Weylian manifold, second-order superintegrable system. 
\end{abstract}

\section{Introduction}\label{sec:introduction}

Let $M$ be a simply connected, orientable smooth manifold of dimension $n$. A Riemannian metric~$g$ on $M$ is called a \emph{metric}. Its associated \emph{conformal metric} on $M$ is
\[
c = \left\{ \Omega^2 g~|~\Omega\in\mathcal C^\infty(M), \Omega\ne0 \right\}\,.
\]
We then call $(M,c)$ a \emph{conformal manifold}. We say that it is \emph{flat}, if there exists $h\in c$ such that $h$ is flat, in which case all elements of $c$ are \emph{conformally flat}.
The cotangent space $T^*M$ of $M$ naturally carries a symplectic structure $\omega=-d\theta$ thanks to the tautological $1$-form $\theta$.
It allows us to define the natural vector field $X_F\in\mathfrak X(T^*M)$ associated to a function $F:T^*M\to\R$ via
\[
\omega(X_F,-) = dF\,.
\]
Furthermore, this allows one to define a natural Poisson structure on $T^*M$, with the Poisson bracket $\{-,-\}:\mathcal C^\infty(T^*M)\times \mathcal C^\infty(T^*M)\to \mathcal C^\infty(T^*M)$ defined by
\[
\{F_1,F_2\}=\omega(X_{F_1},X_{F_2})\,.
\]
In canonical Darboux coordinates $(x,p)$, it takes the form
\[
\{F_1,F_2\}=\sum_{i=1}^n \left( \frac{\partial F_1}{\partial x^i}\frac{\partial F_2}{\partial p_i}-\frac{\partial F_1}{\partial p_i}\frac{\partial F_2}{\partial x^i}\right)\,.
\]
Now let $V\in\mathcal C^\infty(M)$. We call the naturally defined function $H:T^*M\to\R$,
\[
H(x,p)=g^{-1}_x(p,p)+V(x)
\]
a \emph{Hamiltonian} on $M$. For the purposes of the present paper we also introduce multi-Hamiltonians: given a vector space $\mathcal V$ (over $\R$) in the space of functions on $M$, we say that
\[
\mathcal H=\{ g^{-1}_x(p,p)+V(x)\in\mathcal C^\infty(T^*M)~|~ V\in\mathcal V \}
\]
is the naturally associated \emph{multi-Hamiltonian}.
We say that a multi-Hamiltonian is \emph{non-degenerate} if $\dim(\mathcal V)=n+2$, cf.\ \cite{KMK2007,KKM-1,KKM-2,KKM-3} for instance.

We call two multi-Hamiltonians $\mathcal H$ and $\mathcal H'$ \emph{conformally related}, if there is a function $\Omega\in\mathcal C^\infty(M)$ such that
\[
	\mathcal H'=\{ \Omega^{-2}H~|~H\in\mathcal H \}\,.
\]
As one quickly verifies, this poses a strong restriction on the function $\Omega$. Indeed, an arbitrary choice of $\Omega$ clearly does not preserve integrals of the motion. This motivates the introduction of so-called \emph{conformal integrals}, i.e.\ functions $F:T^*M\to\mathbb R$ that are only required to remain constant along Hamiltonian trajectories with $H=0$ (details will be given later).

\subsection{Second-order superintegrable systems}

Non-degenerate multi-Hamiltonians that are \emph{superintegrable}, i.e.\ that admit more than $n$ independent functions $T^*M\to\mathbb R$ that Poisson commute with the (full) multi-Hamiltonian, have received considerable attention in the literature, e.g.\ \cite{KKM2018,Kress&Schoebel,Kress07,Capel_phdthesis,KSV2023,PKM2013}.
Here, we will limit ourselves to \emph{maximally} superintegrable systems of \emph{second order}. By this we mean (multi-)Hamiltonians that commute (with respect to the Poisson bracket) with $2n-2$ additional integrals of the motion. Let $H\in\mathcal H$. For second-order maximal superintegrability, we require functions $F^{(\alpha)}:T^*M\to\mathbb R$, $1\leq\alpha\leq 2n-2$, with $\{H,F^{(\alpha)}\}=0$ such that
\[
	F^{(\alpha)}(x,p)=\sum_{i,j=1}^n K^{(\alpha)\,ij}(x)p_ip_j+W^{(\alpha)}(x),
\]
for functions $K^{ij}$ and $W$ on $M$, and such that the tuple $(H,F^{(1)},\dots,F^{(2n-2)})$ is functionally independent. We mention the well-known fact that here only the functions $W^{(\alpha)}$ depend on the choice $H\in\mathcal H$. Indeed, the coefficients $K^{(\alpha)\,ij}$ are linked to Killing tensor fields, which allows one to formulate the conditions for second-order (maximal) superintegrability geometrically and independent of the choice of the $F^{(\alpha)}$. This will be discussed in more detail in Section~\ref{sec:abundant}, along with proper definitions of the structures we consider in this paper. In the present section, our focus is to give a brief review of some results in the theory of second-order maximally superintegrable systems.

Historically, the study of superintegrable systems has its roots in Kepler's work on the two-body problem, i.e.\ the motion of a planet around a central star. This system is a $3$-dimensional maximally superintegrable system: the angular momenta and the Laplace-Runge-Lenz vector provide $2n-2=4$ functionally independent integrals of the motion in addition to the energy. According to \cite{Evans1990}, the close relationship of second-order superintegrability and the existence of several systems of separation coordinates (of the Hamilton-Jacobi equation) was first noted by Sommerfeld in \cite{Sommerfeld}. Separation systems indeed played a major role in the understanding of second-order maximal superintegrability.
Using this relationship, a classification of $2$-dimensional systems (i.e.\ second-order maximally superintegrable systems with two degrees of freedom) was achieved for spaces of constant curvature in \cite{Evans1990,KKPM2001}, see also \cite{KKM2018,KKM-1,KKM-2}.
For $3$-dimensional systems, partial classification results exist, cf.\ \cite{KKM-3,KKM-4,CK14,Capel&Kress,Capel_phdthesis} and \cite{N1,N2,N3,N4,N5}, for instance.
Cross-relations have been found to so-called quadratic algebras. \.{I}n\"on\"u-Wigner algebra contractions have been used to construct examples, e.g.\ \cite{KM2014,Capel&Kress&Post}. Non-degenerate second-order superintegrability has also been related to hypergeometric orthogonal polynomials organised in the Askey-Wilson scheme \cite{PKM2013,PKM2011}.

The concept of non-degeneracy has played a fruitful role in many of these developments. Non-degenerate multi-Hamiltonians appear to have been introduced in \cite{KMP2000}. A classification of non-degenerate second-order maximally superintegrable systems on conformally flat space is to date achieved in dimensions~$2$ and~$3$, see \cite{Kress07,KKMW2003,KKPM2001,KKM-2,Kress&Schoebel} and \cite{Capel_phdthesis,Capel&Kress}, respectively.
In the classification of $3$-dimensional systems, an important ingredient is the so-called ``$(5\longrightarrow6)$-theorem'' (Theorem 3 in \cite{KKM-3}), which states that $3$-dimensional non-degenerate second-order maximally superintegrable systems admit an additional linearly independent integral of the motion. 
More generally, one can define the concept of \emph{abundantness} for non-degenerate superintegrable systems, cf.\ \cite{KSV2023}: it formalizes the property of a non-degenerate system (which has $2n-1$ functionally independent integrals of the motion, including the Hamiltonian) to admit $n(n+1)/2$ linearly independent integrals of the motion.
To the best knowledge of the author, all non-degenerate systems known to date are of this particular type.

In recent years, considerable advances have been made in the understanding of the geometric structures underlying maximally superintegrable systems.
For instance, for two-dimensional non-degenerate systems in the Euclidean plane, \cite{Kress&Schoebel} establishes an algebraic geometric classification, associating a unique planar line triple arrangement to every superintegrable system. For three-dimensional systems, \cite{Capel&Kress,Capel_phdthesis} provide an explicit description of the conformal classes of these systems using algebraic varieties formed from representations of the conformal group.
To facilitate an extension of the classification to higher dimensions, a geometric characterisation of non-degenerate systems has recently been put forward, cf.\ \cite{KSV2023,KSV2024,KSV2024_bauhaus}, proposing a tensorial formalism that makes higher dimensions accessible to an efficient investigation. It revealed that the Riemannian metrics underlying non-degenerate superintegrable systems are often Hessian metrics \cite{AV2025,CV2025}. Examples on spaces of constant sectional curvature arise from solutions to the (possibly non-flat) Witten-Dijkgraaf-Verlinde-Verlinde (WDVV) equation \cite{KSV2024,Vollmer2024}.

\subsection{Stäckel transformations, coupling constant metamorphosis and conformal superintegrability}

We consider the classical Maupertuis-Jacobi transformation. Let $H(x,p)=g^{-1}_x(p,p)+V(x)$ be a Hamiltonian with potential $V$, and define a new (free) Hamiltonian by
\[
	\tilde H(x,p):=\frac{H(x,p)}{V(x)}.
\]
Then, by a classical result, the Hamiltonian trajectories of $\tilde H$ are the geodesics of~$H$ after a transformation of the time coordinate to $\tilde t$, with $d\tilde t=V(x(t),p(t))\,dt$, cf.\ \cite{Tsiganov2001}.
A similar, but extended transformation has been introduced for integrable systems that admit separable coordinates by Boyer, Kalnins and Miller in \cite{BKM1986}. 
In our context, we may view it (in a simplified manner\footnote{The transformation was originally formulated in terms of separation coordinates and St\"ackel multipliers. Thus, the original statement considers collections of $n$ mutually commuting integrals of the motion, which are transformed into commuting integrals of the motion.}) as follows: consider a pair of Hamiltonians $H_V(x,p)=g^{-1}_x(p,p)+V(x)$ and $H_U(x,p)=g^{-1}_x(p,p)+U(x)$ with integrals of the motion $F_V(x,p)=A(x,p)+W_V(x)$ respectively $F_U(x,p)=A(x,p)+W(x)_U$ of second order, where $A=A(x,p)$ is homogeneous of quadratic order in the momenta $p$. Note that $A$ is shared by both integrals. Then the new Hamiltonian $\tilde H=U^{-1}H_V$ is called the Stäckel transform of $H_V$ via $U$. It admits the second-order integral $\tilde F=F_V+(1-W_U)\tilde H$.

A related concept is coupling constant metamorphosis, which was introduced by Hietarinta et al.\ in \cite{HGDR1984} for integrable Hamiltonians of the form $H_\mu(x,p)=\sum_{i,j=1}^n g^{ij}(x)p_ip_j+V(x)+\mu W(x)$ with potentials $V$ and $W$ and a coupling parameter $\mu$. If $F_\mu$ is an integral of $H_\mu$, then let
\[
	\tilde H_\nu(x,p)=W(x)^{-1}\left( \sum_{i,j=1}^n g^{ij}(x)p_ip_j+V(x)+\nu \right)
\]
where $\nu$ is the new coupling parameter. It follows that $\tilde F_\nu:=F_{\tilde H_\nu}$ is an integral of $\tilde H_\nu$. Note that here we replace the coupling parameter $\mu$ by $\nu$, i.e.\ we swap, conceptually, the roles of the coupling parameter and the energy.
To avoid misunderstandings, we mention that Stäckel transformations and coupling constant metamorphosis are in general not the same. However, they coincide in the case of second-order maximally superintegrable systems, which we consider here. For more information on the relationship of the two transformations, we refer the interested reader to \cite{Post10}.

Note that Stäckel transformations perform a conformal rescaling of the Hamiltonian, $H\longrightarrow\frac{H}{\Omega^2}$, where $\Omega^2$ however has to be a very specific choice of function. Indeed, rescalings of the above type obviously do not, in general, preserve superintegrability of a given Hamiltonian.
A very natural question thus is whether a broader framework exists in which we may carry out rescalings via a generic function $\Omega$.  The answer is yes: generic conformal rescalings do preserve the so-called second-order (maximal) \emph{conformal} superintegrability, in which integrals of the motion are replaced by conformal integrals, i.e.\ functions that are constant along Hamiltonian trajectories \textsl{with $H=0$}. (Note that $H$ is an integral of the motion and hence $H=0$ is preserved along trajectories.) A precise definition of conformally superintegrable systems is given in Section~\ref{sec:abundant}.
Conformal superintegrability aligns very naturally with the (aforementioned) tensorial framework developed in \cite{KSV2024,KSV2024_bauhaus}. Indeed, the first of these references argues that these systems should be considered as structures on conformal manifolds (called \emph{c-superintegrability} therein).
\medskip

The purpose of the present paper is to clarify the concept of c-super\-integra\-bility by shedding light onto it from the angle of \emph{Weylian geometry}, closing a gap in the literature.
We explain how an abundant superintegrable system gives rise to a Weyl structure.
We furthermore extend the definition of abundant superintegrable systems to Weylian manifolds, including the case of $2$-dimensional systems. (Note that, while the ideas of~\cite{KSV2024} work in general dimension, there is a focus on $n\geq3$  regarding the abundant structural equations and c-superintegrability.)
As we are going to see, cf.\ Definitions~\ref{defn:weylian.abundant.structure} and~\ref{defn:abundant.structure.weylian.mfd} below, subtleties arise if the underlying Weylian geometry is fixed in advance. In contrast, the abundant structure itself defines a preferred Weylian structure on $(M,c)$, which may differ from a given one. We characterise this difference by invariant data. In Section~\ref{sec:structural.n>2}, we review and improve the invariant structural equations for dimensions $n\geq3$, cf.\ \cite{KSV2024}.
In Section~\ref{sec:structural.n>2}, we determine the analogous invariant equations for the structural equations in dimension~$n=2$, based on the structural equations given in~\cite{KSV2024_bauhaus}, which differ significantly from those in higher dimensions. We conclude the paper in Section~\ref{sec:semi-weylian} showing that abundant superintegrable systems can be realised as semi-Weyl structures. These structures have received some attention in the context of statistical manifolds and affine hypersurface geometry, e.g.\ \cite{BN2022,Matsuzoe2001,Matsuzoe2010}, for instance.\medskip

The paper is organised as follows: In the next section, we introduce the main objects of the paper, namely Weylian manifolds and abundant superintegrable systems in Sections~\ref{sec:weylian} and~\ref{sec:abundant}, respectively.
In Section~\ref{sec:main}, the main discussion of the paper will take place. Specifically, in Section~\ref{sec:structural.n>2}, we discuss the case of systems of dimension $n>2$, while Section~\ref{sec:structural.n=2} is devoted to the case of $2$-dimensional systems, for which the underpinning structural equations are considerably different from higher dimensions.
Finally, we use so-called semi-Weyl structures (a generalisation of Weyl structures) to encode abundant superintegrable systems, see Section~\ref{sec:semi-weylian}.

\section{Preliminaries}

In this section, we introduce the main structures used in this paper, namely Weylian manifolds (Section \ref{sec:weylian}) and abundant non-degenerate superintegrable systems (Section \ref{sec:abundant}), which geometrically can be formalized as abundant structures (Section \ref{sec:sbundant.structure}).

\subsection{Weylian manifolds}\label{sec:weylian}

The purpose of the present section is to introduce Weylian manifolds, also known as \emph{Weyl manifolds}, e.g.\ \cite{Folland1970,Norden1976,CP1999,BFM2023}.
Following \cite{MS2020}, we use the attribute Weylian in order to avoid confusion with the concept of Weyl manifolds in Cartan geometry, cf.~\cite{CM2023, CS2009}.
The ensuing twofold definition follows \cite{MS2020}, see also the references therein.

\begin{defn}\label{defn:Weyl.structure}~
	\begin{enumerate}[label=(\roman*)]
		\item A \emph{Weyl structure} is given by a triple $(M,c,D)$ consisting of a differentiable manifold $M$, a conformal class $c$ of Riemannian metrics on $M$, and a torsion-free affine connection $D$ satisfying the condition, for $g\in c$,
		\begin{equation}\label{eq:Weylian.connection}
			D g=-\theta_g\otimes g
		\end{equation}
		for some differential 1-form $\theta_g$ depending on $g$.
		\item A \emph{Weylian structure} $(M,\Phi)$ is a differentiable manifold $M$ together with a \emph{Weylian metric} $\Phi$ on $M$, i.e.\ an equivalence class of pairs $(g,\varphi)$ consisting of a Riemannian metric $g$ and a real-valued differential $1$-form~$\varphi$, identified under
		\[ (g,\varphi)\sim(\tilde g,\tilde \varphi)\quad:\Leftrightarrow\quad \tilde g=\Omega^2g\,,\quad \tilde\varphi=\varphi-d\ln|\Omega|\,, \]
		for some function $\Omega\in\mathcal C^\infty(M)$.
	\end{enumerate}
\end{defn}

\begin{rmk}\label{rmk:Weylian.correspondence}
	The two definitions are equivalent as a Weylian metric $\Phi$ gives rise, for each representative $(g,\varphi)\in\Phi$, to a connection $\nabla^{(g,\varphi)}$ satisfying
	\[ \nabla^{(g,\varphi)}g+2\varphi g = 0\,. \]
	This statement goes back to Weyl \cite{Weyl1918,WeylGes}. We then find ($X,Y,Z\in\mathfrak X(M)$)
	\begin{align*}
		\nabla_Z^{(g,\varphi)}(\Omega^2g)(X,Y)
		&=2\Omega^2\,(d\ln|\Omega|)(Z)\,g(X,Y)-2\varphi(Z)\Omega^2\,g(X,Y)
		\\
		&=-2\,(\varphi(Z)-(d\ln|\Omega|)(Z))\,g(X,Y),
	\end{align*}
	i.e.\ we can set $D=\nabla^{(g,\varphi)}$, yielding a Weyl structure $(M,[g],D)$. However, note that the Weyl structure is not unique: in fact, for any $1$-form $\alpha\in\Omega^1(M)$, $(M,[g],D')$ with $D'_XY:=D_XY+\alpha(X)Y$ is also a Weyl structure. On the level of Weylian structures, this corresponds to a shift of $\varphi$ by $\alpha$, and therefore we may identify Weylian structures and Weyl structures.
	For more details, see \cite{MS2020}.
	In the remainder of the paper, we speak simply of a \emph{Weylian manifold} whenever we do not wish to specify whether we consider a given setup as in Definition~\ref{defn:Weyl.structure} as a Weyl or a Weylian structure.
\end{rmk}

We now introduce some basic properties of Weylian manifolds.

\noindent\textsl{Flatness of a Weylian manifold:} we say that a Weyl structure is \emph{flat}, if the underlying conformal metric $c$ is flat. Equivalently, a Weylian structure is \emph{flat}, if there is $(g,\varphi)\in\Phi$ such that $g$ is flat.\smallskip

\noindent\textsl{Exactness and closedness of a Weylian manifold:}
A Weyl structure is \emph{exact}, if the connection $D$ is the Levi-Civita connection of some metric in $c$, and \emph{closed}, if this property holds in the neighbourhood of each point. Equivalently, these properties hold, if~$\theta_g$ is exact or closed, respectively, for some (and then any) element of $c$, cf.\ \cite{MP2024,BFM2023}. We define exactness and closedness for  Weylian structures accordingly.

\subsection{Abundant superintegrable systems}\label{sec:abundant}

In Section~\ref{sec:introduction}, we have mentioned the role of conformal transformations (presented as Stäckel transformations or coupling constant metamorphosis) in the context of second-order superintegrability. We have also indicated that generic conformal rescalings of the Hamiltonian, while not preserving superintegrability, do preserve conformal superintegrability. We also recall that Weyl manifolds allow for conformal changes of the metric.
With this in mind, it is obvious that the natural setting for the study of the relationship of Weylian geometry and second-order superintegrability is that of second-order conformally superintegrable systems.

\begin{conv}
	Note that in the following, in order to keep the discussion concise, we will usually drop the attributes `maximal', `second-order' and `conformal' when discussing second-order maximally conformally superintegrable systems, as there is no risk of confusion.
\end{conv}

A \emph{superintegrable system}, or, more precisely speaking, a second-order maxi\-mally conformally superintegrable system, is a Riemannian manifold $(M,g)$ with a natural Hamiltonian $H:T^*M\to\R$ admitting $2n-2$ additional functions $F^{(\alpha)}:T^*M\to\R$, $1\leq\alpha\leq 2n-2$, such that (with $F^{(0)}:=H$)
\begin{enumerate}[label=(\roman*)]
	\item for each $1\leq\alpha\leq 2n-2$ there are functions $K^{ij}$ and $W$ on $M$ such that
	\begin{equation}\label{eq:second-order}
		F^{(\alpha)}(x,p)=\sum_{i,j=1}^n K^{ij}(x)p_ip_j+W(x)\,.
	\end{equation}
	\item the functions $F^{(\alpha)}$, $1\leq\alpha\leq 2n-2$, satisfy
	\begin{equation}\label{eq:integral}
		\{F^{(\alpha)},H\}=2\varrho^{(\alpha)}\,H
	\end{equation}
	for suitable $\varrho^{(\alpha)}:T^*M\to\R$, respectively. We require these $\varrho^{(\alpha)}$ to be polynomial in the momenta.
	\item the functions $(F^{(\alpha)})_{0\leq\alpha\leq 2n-2}$ are functionally independent.
\end{enumerate}
The first point in the above list is the \emph{second-order} property, and the number $2n-2$ indicates the maximality of the superintegrable system. Equation~\eqref{eq:integral} implies that the functions $F^{(\alpha)}$, $0\leq\alpha\leq2n-2$, are constant when restricted to Hamiltonian trajectories on the zero locus of $H$, i.e.~the solution curves $\gamma$ of Hamilton's system of equations,
\begin{equation}\label{eq:hamilton's}
	\dot\gamma = X_H\circ\gamma\,,
\end{equation}
that in addition satisfy $H(\gamma(t))=0$.

\begin{rmk}
	Consider \eqref{eq:integral} and recall that we use canonical Darboux coordinates $(x,p)$. As indicated, for each $\alpha$, $\varrho^{(\alpha)}$ is a function on $T^*M$ which is polynomial in the momenta $p$. It is easily checked that the left hand side of \eqref{eq:integral} is a cubic polynomial in momenta (with coefficients that depend on the position $x$). In fact, it has two homogeneous components (in the momenta), namely a cubic and a linear one. The same has to be true for the right hand side. Therefore, it follows that $\varrho$ has to be linear in $p$, and hence there is a $1$-form $\rho\in\Omega^1(M)$ with $\varrho=\rho(p)$.
	A straightforward computation shows that the cubic homogenous (in momenta) part of the polynomial condition \eqref{eq:integral} is equivalent to the condition of a \emph{conformal Killing tensor},
	\[
	\nabla_XK(X,X)=\rho(X)g(X,X)\qquad\qquad\forall X\in\mathfrak X(M),
	\]
	where $K=\sum_{i,j=1}^n(\sum_{a,b=1}^n g_{ia}K^{ab}g_{bj})\,dx^i\odot dx^k$, and where
	$$ \rho=\frac1{n+2}\left( 2\div_g(K)+d\tr_g(K) \right). $$
	The linear homogeneous component, on the other hand, can be rewritten as
	\begin{equation}\label{eq:pre-BD}
	dW = \hat K(dV)+V\,\rho\,,
	\end{equation}
	where $\hat K$ denotes the endomorphism naturally associated to $K$ (via the inverse of the metric $g$). 
\end{rmk}

Consider a conformal integral, i.e.\ a function $F:T^*M\to\mathbb R$ such that $\{H,F\}=\varrho\,H$ as above. Denote the conformal Killing tensor associated to $F$ by $K$. It follows that also $F'=F-\frac1n\,\mathrm{tr}_g(K)\,H$ is a conformal integral,
\[
	\{H,F'\}=\{H,F\}-\frac1n\,\{H,\tr_g(K)\}\,H=\left(\varrho-\frac1n\,(d\tr_g(K))(p)\right)\,H=:\varrho'\,H\,.
\]
As a result, the quadratic terms of $F'$ correspond to a trace-free conformal Killing tensor.
This allows us to agree to the following convention.

\begin{conv}
	Without loss of generality, we assume from now on that $K$ (for each $F^{(\alpha)}$, $1\leq\alpha\leq2n-2$, be trace-free. This is no restriction: modifying the trace of a conformal Killing tensor, we again obtain a conformal Killing tensor.
\end{conv}

\begin{conv}
	We will tacitly use a circumflex diacritic to denote, for a symmetric tensor field $Q\in\Gamma(\mathrm{Sym}^{k+1}(T^*M))$, the (via $g$) naturally associated section of $\mathrm{Sym}^{k}(T^*M)\otimes TM$, whenever the underlying metric $g$ is clear.
\end{conv}

We denote by $\Pi_{(ijk)\circ}$ the projector onto the totally symmetric (in $i,j,k$) and trace-free (w.r.t.\ contraction in any pair of $i,j,k$) part of a tensor field (which may have further indices over which no symmetrisation nor trace-freeness is enforced). The circle $\circ$, in particular, will mean trace-freeness. Consequently, the projector onto the totally tracefree part is denoted by $\Pi_\circ$. By $\Gamma(\mathrm{Sym}^2_\circ(T^*M))$ we denote the space of sections in the symmetric, trace-free tensors of rank two.

We return to \eqref{eq:pre-BD}. Our aim is now to explain why the concept of a second-order maximally superintegrable system is well-defined for a multi-Hamiltonian system $\mathcal H$.
To this end, recall that in the integrals $F$, of the form $F=\sum_{i,j=1}^n K^{ij}p_ip_j+W$, only the scalar part~$W$ depends on the choice $H\in\mathcal H$. Indeed, the condition of a conformal Killing tensor only relies on the metric $g$.
Hence, having fixed $H\in\mathcal H$, we may integrate \eqref{eq:pre-BD} for $W$, provided that the integrability condition for \eqref{eq:pre-BD} holds. It is obtained by applying the differential (exterior derivative) to~\eqref{eq:pre-BD}. We find the conformal version of the so-called \emph{Bertrand-Darboux condition} \cite{bertrand_1857,darboux_1901}
\begin{equation}\label{eq:BD}
	d(\hat K(dV))+V\,d\rho+dV\wedge\rho=0\,.
\end{equation}
In other words, once we have specified an element $H\in\mathcal H$ and a trace-free conformal Killing tensor such that \eqref{eq:BD} holds, we may integrate for $W$. We introduce the space
\begin{align*}
	\mathcal K:=\{K\in\Gamma(\mathrm{Sym}^2_\circ(T^*M)\,|\, & \text{$K$ is a trace-free conformal Killing tensor}
	\\
	 &\quad\text{such that \eqref{eq:BD} holds for all $H\in\mathcal H$}\,\}.
\end{align*}
Given an element of $\mathcal K$, we are hence able to recover the integrals for the multi-Hamiltonian $\mathcal H$, i.e.\ we obtain the integrals for any choice of $H\in\mathcal H$.
This ensures that the concept of second-order (maximal conformal) superintegrability is well-defined for a multi-Hamiltonian. If the multi-Hamiltonian is non-degenerate, we say that the pair $(\mathcal H,\mathcal K)$ is a non-degenerate (second-order maximally conformally) superintegrable system.

Non-degenerate superintegrable systems have been studied intensively in the literature, cf.\ \cite{KKM2018} for an overview of many results on the subject. In \cite{KSV2023,KSV2024,KSV2024_bauhaus}, a geometric approach is pursued that is particularly suited for higher dimensions.
Specifically, assume that $\mathcal K$ is \emph{irreducible}, i.e.\ that the space of associated endomorphisms,
\[
	\mathcal K^\sharp := \{ A\in\mathrm{End}(TM)\,|\,g(A(X),Y)=K(X,Y) \text{ for some $K\in\mathcal K$}\,\}\,,
\]
is irreducible, in the sense that pointwise on an open and dense subset of $M$ they do not have a non-trivial invariant subspace in common.
Then it is shown in \cite{KSV2024} that we may write
\begin{equation}\label{eq:Wilcz}
	\nabla^2_{ij}V=\sum_{k=1}^n T\indices{_{ij}^k}\nabla_kV+\tau_{ij}\,V
\end{equation}
for every $H=\sum_{i,j=1}^n g^{ij}p_ip_j+V\in\mathcal H$. The point here is that the coefficients $T\indices{_{ij}^k}$ and $\tau_{ij}$ do not depend on the choice of $H$, but only on $\mathcal K$. Moreover, the coefficients form tensor fields $\hat T\in\Gamma(\mathrm{Sym}^2_\circ(T^*M)\otimes TM)$ and $\tau\in\Gamma(\mathrm{Sym}^2_\circ(T^*M))$, respectively. These tensor fields are called the \emph{primary} and, respectively, \emph{secondary structure tensor} of the non-degenerate superintegrable system. From the integrability conditions of \eqref{eq:Wilcz} it follows that
\begin{equation*}
	T_{ijk}=S_{ijk}+\bar t_ig_{jk}+\bar t_jg_{ik}+\bar t_kg_{ij}
\end{equation*}
where $S_{ijk}$ and $\bar t_i$ are components of a totally symmetric and trace-free tensor field $S\in\Gamma(\mathrm{Sym}^3_\circ)$ and of a closed $1$-form $\bar t\in\Omega^1(M)$, respectively.
Locally, $\bar t$ is therefore the differential of a function, say $\bar t=dt$, which we will draw upon in the following.

We now introduce the concept of abundant superintegrable systems.
As mentioned earlier, this concept can be traced back to \cite{KKM-3}, from which it follows that all $3$-dimensional non-degenerate superintegrable systems are abundant, i.e.\ they admit an additional (linearly independent) integral of the motion.

\begin{defn}\label{defn:abundant.system}
	A non-degenerate (second-order maximally conformally) superintegrable system is said to be \emph{abundant} if $\mathcal K$ has dimension $\frac{1}{2}\,n(n+1)-1=\frac12\,(n-1)(n+2)$.
\end{defn}

\subsubsection{Abundant structures}\label{sec:sbundant.structure}

Abundant superintegrable systems are a convenient model case for the study of non-degenerate systems because all known examples of non-degenerate superintegrable systems are, in fact, abundant.
Following \cite{KSV2023,KSV2024,KSV2024_bauhaus} and \cite{CV2025}, we will now introduce a geometric structure, called \emph{abundant structure}, to encapsulates an abundant superintegrable system in a geometric manner. It is based on the structural equations determined in \cite{KSV2024} and \cite{KSV2024_bauhaus} for abundant non-degenerate superintegrable systems in dimensions $n\geq3$ and, respectively, $n=2$. The relationship of Definition \ref{defn:abundant.system} above and Definition~\ref{defn:abudant.structures} below is as follows:
\begin{itemize}
	\item for a given abundant non-degenerate superintegrable system, the tensors $S$, $\bar t$ and $\tau$ satisfy (locally, where $\bar t=dt$ for a function $t$) the conditions in Definition \ref{defn:abudant.structures}.
	\item for a given abundant structure, we can solve (by integration) the equations\footnote{In the case $n\geq3$, we add Equation~\eqref{eq:tau.n>2} below to the system \eqref{eq:n>2.conditions} in order to define $\tau$.} in Definition \ref{defn:abudant.structures} for unique solutions $S$, $\bar t$ and $\tau$ in a neighborhood of a given point $x\in M$. We can then integrate \eqref{eq:Wilcz} for $V$ up to the choice of $n+2$ integration constants. It was proven in \cite{KSV2024} that for a generic choice of such a solution we can then integrate \eqref{eq:BD} for $K$ (subject to $\frac12(n-1)(n+2)$ integration constants), and that, from the resulting space~$\mathcal K$, we can choose $2n-2$ elements $K^{(\alpha)}$ such that $(H,F^{(1)},\dots,F^{(2n-2)})$ is a functionally independent collection of conformal integrals of the motion, where $F^{(\alpha)}=\sum_{i,j=1}^n K^{(\alpha)\,ij}p_ip_j+W^{(\alpha)}$ with $W^{(\alpha)}$ being the corresponding solution of~\eqref{eq:pre-BD}.
\end{itemize}
Equivalently, we may say that locally the concept of abundant structures and of abundant non-degenerate superintegrable systems coincide.

We proceed to state the definition of an \emph{abundant structure}, for which we introduce the following notation. We denote the Ricci tensor of the Riemannian metric $g$ by $\mathrm {Ric}$,
\[
	\mathrm{Ric}(X,Y)=\tr(R^g(\cdot,X)Y),
\]
where $R^g$ is the curvature tensor of $g$. The scalar curvature of $g$ is going to be denoted by $R$ as there is no real risk of confusion with the curvature tensor.
The Schouten tensor of $g$ is going to be denoted by $\P$, i.e.
\[
	\P = \frac{1}{n-2}\left( \mathrm{Ric}-\frac{R}{2(n-1)}g\right)
\]
Moreover, we denote the norm (with respect to the metric $g$) of a tensor with components $T_{{i_1}\cdots{i_r}}$ by
\[
|T|_g^2=T^{{i_1}\cdots{i_r}}T_{{i_1}\cdots{i_r}}\,.
\]
We may omit the subscript $g$, if the underlying metric is clear.
Also, for better legibility, we abbreviate $t_i:=\nabla_it$ in the following, leading to the natural equality $\bar t_i=\nabla_it=t_i$.

In the remainder of the paper, we furthermore adopt Einstein's summation convention for repeated pairs of (upper and lower) indices.

\begin{defn}[\cite{KSV2024,KSV2024_bauhaus,CV2025}]\label{defn:abudant.structures}
	Let $(M,g)$ be a conformally flat Riemannian manifold of dimension $n\geq2$, endowed with a totally symmetric and trace-free tensor field $S\in\Gamma(\mathrm{Sym}^3_\circ(T^*M))$ and a function $t\in\mathcal C^\infty(M)$ such that\footnote{Compared to~\cite{KSV2024_bauhaus}, two typographical errors have been corrected in~\eqref{eq:DXi.general} and~\eqref{eq:DDt.general}, involving the replacement of two erroneous signs.}
	\begin{itemize}
		\item if the dimension is $n\geq3$:
		\begin{subequations}\label{eq:n>2.conditions}
			\begin{align}
				\nabla^2_{ij}t
				&=  3\,\P_{ij}
				+\frac1{3(n-2)} \left(
				{S_{i}}^{ab}S_{jab}
				+\frac{n-6}{2(n-1)(n+2)}\,|S|^2\,g
				\right)
				\nonumber
				\\ &\qquad\qquad\qquad
				+\frac13 \left(
				t_i t_j-\frac12|\grad_gt|^2\,g
				\right)
				\\
				\nabla_lS_{ijk}
				&= \tfrac{1}{3}\,\Pi_{(ijk)\circ}
				\biggl(
				\,S\indices{_{il}^{a}} S_{j k a}
				+3\,S_{i j l} t_{k}
				+S_{i j k} t_{l}
				\nonumber
				\\ &\qquad\qquad\qquad
				+\left(
				\tfrac{4}{n-2}\,S\indices{_j^{ab}} S_{kab}
				-3\,S_{j k a} \bar{t}^{a}
				\right)\,g_{i l}
				\biggr)
				\\
				0 &= \Pi_\circ \left( g^{ab}S_{aik}S_{bjl}-g^{ab}S_{ail}S_{bjk} \right)
			\end{align}
		\end{subequations}
		\item if the dimension is $n=2$:
		\begin{subequations}\label{eq:n=2.conditions}
			\begin{align}
				\nabla_lS_{ijk}
				&=  \Pi_{(ijk)\circ}\left[
				-\frac23S_{ijk}t_l+2t_iS_{jkl}
				+\Xi_{ij}g_{kl}
				\right]
				\label{eq:DS.general}
				\\
				\nabla_k\Xi_{ij}
				&= \frac13S_{ijk}\,\,|S|_g^2
				+\frac43\Pi_{(ijk)\circ}\Xi_{jk}t_i
				+\frac43\Pi_{(ij)\circ} g_{jk}\beta_{i}
				\label{eq:DXi.general}
				\\
				\nabla^a\tau_{ak}
				&= -S_{kab}\tau^{ab}+\beta_{k}
				-\frac23\,\Xi_{ka}t^a - \frac49\,S_{kab}t^at^b
				- \frac59\,|S|_g^2\,t_k
				\nonumber
				\\ &\hspace*{6cm}
				+\frac12\nabla_kR- Rt_k
				\label{eq:divTau.general}
				\\
				\nabla^2_{ij}t
				&= \frac32\left[
				\tau_{ij}+\frac23S_{ija}t^a+\frac89\Pi_{(ij)\circ} t_it_j
				-\frac13\Xi_{ij}
				\right]
				\nonumber
				\\ &\qquad\qquad\qquad\qquad\qquad\qquad\qquad\qquad
				+\frac12g_{ij}\left[ \frac13\,|S|_g^2+\frac32R \right]
				\label{eq:DDt.general}
			\end{align}
		\end{subequations}
		where for better legibility we introduce tensor fields $\Xi\in\Gamma(\mathrm{Sym}^2_\circ(T^*M))$, $\tau\in\Gamma(\mathrm{Sym}^2_\circ(T^*M))$ and the $1$-form $\beta\in\Omega^1(M)$, defined by
		\begin{align*}
			\Xi_{ij}  &= \nabla^aS_{aij}+\frac23\,S_{ija}t^a\,, \\
			\tau_{ij} &= \frac23\,(\nabla^2_{ij}t-\tfrac1n\,\Delta t\,g_{ij})
			-\frac23\,S_{ija}t^a+\frac13\,\Xi_{ij}-\frac89(t_it_j-\tfrac1n\,t^at_a\,g_{ij})\,, \\
			\beta_k   &= S_{kab}\Xi^{ab}\,,
		\end{align*}
		where $\Delta$ is the Laplace-Beltrami operator.
	\end{itemize}
	We then say that $(S,t)$ is an \emph{abundant structure} on $(M,g)$. If we consider only the underlying smooth manifold $M$ to be fixed, we call $(g,S,t)$ an \emph{abundant structure} on $M$.
\end{defn}

Note that the equations in Definition~\ref{defn:abudant.structures} depend on $t$ only via $t_i=\bar t_i$.
Next, observe that the tensor fields $\Xi,\tau\in\Gamma(\mathrm{Sym}^2_\circ(T^*M))$ are mainly introduced as auxiliary objects, and indeed we shall use them as mostly technical definitions. However, we comment that $\tau$ has a deeper significance. In fact it distinguishes properly and conformally superintegrable systems, cf.~\cite{KSV2024,BLMS24} and also \cite{GKL2024}: if $\tau_{ij}=0$, then the abundant conformally superintegrable system admits proper integrals of the motion (rather than just on the zero locus of $H$). For completeness, we remark that for $n\geq3$, this tensor is given by
\begin{equation}\label{eq:tau.n>2}
	\tau_{ij} = \frac{2}{3(n-2)}\bigg(
					S_{ija}t^a+\Pi_{(ij)\circ} t_it_j -\Pi_{(ij)\circ} S\indices{_i^{ab}}S_{jab} 
			\bigg)
				-2\,\mathring{\mathsf{P}}_{ij}.
\end{equation}
The relationship between abundant structures and superintegrable systems has been elucidated by \cite{KSV2024,KSV2024_bauhaus}, and we refer the interested reader to these references for more details: Here we confine ourselves to mentioning that given an abundant manifold, one may integrate a certain system of partial differential equations whose solutions solve~\eqref{eq:integral}.
In fact, one typically obtains $\frac12(n-1)(n+2)$ linearly independent functions $F^{(\alpha)}$ of the form~\eqref{eq:second-order} that satisfy~\eqref{eq:integral} for a non-degenerate multi-Hamiltonian. The interested reader will find more details on this construction in~\cite{KSV2024,KSV2024_bauhaus} and \cite{KKM2018} as well as in the references therein.
In particular, it is shown in \cite{KSV2024} that the equations in Definition~\ref{defn:abudant.structures} are conformally invariant. By this we mean that these equations remain true if we replace $g\mapsto\Omega^2g$ for some nowhere vanishing $\Omega\in\mathcal C^\infty(M)$ (and its Levi-Civita connection accordingly) as well as
\[ S\to\Omega^2S\,,\qquad t\to t-3\ln|\Omega|\,. \]

\begin{defn}
	Two abundant structures $(g,S,t)$ and $(g',S',t')$ on the same smooth manifold are said to be \emph{conformally related} if $g'=\Omega^2g$, $S'=\Omega^2S$ and $t'=t-3\ln|\Omega|+c$ for some $c\in\mathbb R$.
\end{defn}

This and the next definition follow \cite{CV2025}, cf.\ \cite{KSV2024,KSV2024_bauhaus}.

\begin{defn}\label{defn:equivalence}
	Two abundant structures $(g,S,t)$ and $(g',S',t')$ on the same smooth manifold are said to be \emph{equivalent} if $dt'=dt$. 
\end{defn}

\section{Abundant structures and Weylian manifolds}\label{sec:main}

Having introduced abundant structures and Weylian manifolds, we are now going to relate these two structures.
Recall that for an exact Weyl structure $(M,c,D)$, the $1$-form $\theta_g$ in \eqref{eq:Weylian.connection} is exact for any $g\in c$.
We thus have $\theta_g=d\Theta_g$ for some function $\Theta_g\in\mathcal C^\infty(M)$. Analogously, consider the corresponding Weylian structure $(M,\Phi)$.
For $(g,\varphi)\in\Phi$, the $1$-form $\varphi$ is also exact, and $\varphi=d\phi$ for some function $\phi$, cf.\ Remark~\ref{rmk:Weylian.correspondence}. We say that $\Theta_g$ and $\phi$ are \emph{potentials} for $\theta_g$ and $\varphi$, respectively. With this terminology, we state the following lemma, which allows us to define abundant structures on Weylian manifolds. Its proof is implied by~\cite{KSV2024}.
\begin{lemma}\label{lem:construction.Weyl.structure}
	Let $(M,\Phi)$ be an exact Weylian structure. Let $(g,\varphi)\in\Phi$ and $\phi$ a potential for $\varphi$, $\varphi=d\phi$. Assume that $(M,g)$ admits an abundant structure encoded in the tensor field $S\in\Gamma(\mathrm{Sym}^3_\circ(T^*M))$ and the function $t\in\mathcal C^\infty(M)$.
	Next, let $(g',\varphi')$ be another representative of the Weylian metric $\Phi$, with potential $\phi'$, $\varphi'=d\phi'$.
	
	Then $(M,g')$ carries the abundant structure encoded in the structure tensor $S'=g'g^{-1}S$ and the function $t'=t-3(\phi-\phi')$.
\end{lemma}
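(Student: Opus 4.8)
The plan is to reduce the statement to the conformal invariance of the abundant structure equations already recorded in the excerpt (invariance under $g\mapsto\Omega^2g$, $S\mapsto\Omega^2S$, $t\mapsto t-3\ln|\Omega|$), combined with the observation made just after Definition~\ref{defn:abudant.structures} that those equations depend on $t$ only through $t_i=\nabla_it$. First I would exploit that $(g,\varphi)$ and $(g',\varphi')$ represent the same Weylian metric $\Phi$: by the identification in Definition~\ref{defn:Weyl.structure}(ii) there exists a nowhere-vanishing $\Omega\in\mathcal C^\infty(M)$ with $g'=\Omega^2g$ and $\varphi'=\varphi-d\ln|\Omega|$.

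Next I would pin down $\Omega$ in terms of the potentials. Since $\varphi=d\phi$ and $\varphi'=d\phi'$, the relation $\varphi'=\varphi-d\ln|\Omega|$ gives $d\ln|\Omega|=d(\phi-\phi')$, so that $\ln|\Omega|=\phi-\phi'+c$ for some constant $c\in\R$ (using that $M$ is connected). In particular $t-3\ln|\Omega|=t-3(\phi-\phi')-3c=t'-3c$, i.e.\ the proposed $t'$ differs from the conformally transformed function $t-3\ln|\Omega|$ only by the additive constant $3c$.

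I would then verify that $S'$ is the conformal transform of $S$. Writing $S'=g'g^{-1}S$ in components and using $g'_{ia}=\Omega^2 g_{ia}$, raising one index of $S$ with $g^{-1}$ and lowering it again with $g'$ produces $g'_{ia}g^{ab}S_{bjk}=\Omega^2 S_{ijk}$; as $\Omega^2$ is scalar, this is independent of the chosen slot, so $S'=\Omega^2 S$, which is exactly the conformal transformation law for $S$.

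Finally I would assemble the conclusion. By the conformal invariance of the equations in Definition~\ref{defn:abudant.structures}, the triple $(\Omega^2 g,\Omega^2 S,\,t-3\ln|\Omega|)=(g',S',t'-3c)$ is an abundant structure on $M$. Since those equations involve $t$ only via $t_i=\nabla_it$, replacing $t'-3c$ by $t'$ leaves every condition unchanged, so $(g',S',t')$ is an abundant structure, as claimed. The only genuine subtlety—and the step I would treat most carefully—is precisely this constant ambiguity: passing from the conformal factor $\ln|\Omega|$ to the difference of potentials $\phi-\phi'$ is determined only up to a constant, and the argument closes solely because the abundant structure equations are insensitive to a constant shift of $t$.
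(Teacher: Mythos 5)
Your proposal is correct and follows essentially the same route as the paper's own proof, which likewise identifies $(g',\varphi')=(\Omega^2 g,\varphi-d\ln|\Omega|)$, concludes $S'=\Omega^2 S$ and $t'=t-3\ln|\Omega|$, and appeals to the conformal invariance of the structural equations established in the cited reference. If anything, you are more scrupulous than the paper: its proof writes $t'=t-3\ln|\Omega|$ outright, silently absorbing the additive constant $3c$ that you correctly isolate and dispose of via the observation (made in the paper just after Definition~\ref{defn:abudant.structures}) that the equations depend on $t$ only through $dt$.
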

Note that $g'g^{-1}S$ here means that we first raise, then lower one index using the metrics $g$ and $g'$, respectively.
We also remark that $dt'=dt-3(\varphi-\varphi')$, and hence the abundant structure is determined by $\varphi'-\varphi$ up to equivalence in the sense of Definition~\ref{defn:equivalence}. Conversely, if $g'=g$ and $t'-t$ is constant, it must hold that $\varphi'=\varphi$. The definite article for the abundant structure in the last sentence of the lemma is therefore justified.
\begin{proof}[Proof of Lemma~\ref{lem:construction.Weyl.structure}]
	Clearly, $(g',\varphi')=(\Omega^2g,\varphi-d\ln|\Omega|)$ for some function $\Omega$.
	We hence have $S'=\Omega^2S$ and $t'=t-3\ln|\Omega|$.
	It then follows from~\cite{KSV2024} that $(M,g',S',t')$ is an abundant structure, given that $(M,g,S,t)$ is one, and since all objects satisfy the properties of conformally related systems.
	The claim follows.
\end{proof}

We observe that $\hat S=g^{-1}S\in\Gamma(\mathrm{Sym}^2_\circ(T^*M)\otimes TM)$ and $\hat S=(g')^{-1}S'\in\Gamma(\mathrm{Sym}^2_\circ(T^*M)\otimes TM)$ coincide.
We also note that in the lemma, $t$ and $\varphi$ transform into $t'$ and $\varphi'$, respectively, in an analogous manner. The equation $\bar t-3\varphi=\bar t'-3\varphi'$ hence yields a conformal invariant, where $\bar t=dt,\bar t'=dt'$. It seems that this invariant was first observed in \cite{Capel_phdthesis} in the case $n=3$. We are now ready to define abundant structures on Weylian manifolds.

\begin{defn}\label{defn:weylian.abundant.structure}
	We call a triple $(M,\Phi,\hat S)$ consisting of a smooth manifold $M$ with Weylian metric $\Phi$ as well as a (2,1)-tensor field $\hat S\in\Gamma(\mathrm{Sym}^3_\circ(T^*M)\otimes TM)$ a \emph{natural abundant Weylian structure} if $S=g\hat S\in\Gamma(\mathrm{Sym}^3_\circ(T^*M))$ is trace-free and  satisfies the conditions in Definition~\ref{defn:abudant.structures} with $dt=3\varphi$.
\end{defn}
We introduce \emph{natural abundant Weyl structures} and \emph{natural abundant Weyl\-ian manifolds} accordingly, in the obvious way.
Note that in the definition, we suppress for the sake of conciseness the irrelevant constant that may be added to~$t$, cf.\ Definition~\ref{defn:equivalence}.

\begin{rmk}
	Compare Definition~\ref{defn:weylian.abundant.structure} to the definition of \emph{c-superintegrable systems}, cf.\ Definition~3.9 in~\cite{KSV2024}. We note that our focus here is on the case of abundant structures, but that the underpinning ideas are nontheless of a general nature.
	Indeed, we observe the fundamental difference that, in the reference, a c-superintegrable system is defined on a merely conformal structure $(M,c)$, i.e.\ a smooth manifold with a conformal class of metrics. The discussion in the current section describes the underpinning Weylian geometry. In~\cite{KSV2023}, this underlying Weylian structure exists implicitly, cf.\ Section~6 of \cite{KSV2024}. For $n\geq3$, it is indirectly inferred, by Eq.~(6.6) of~\cite{KSV2023}, from a conformal class of abundant superintegrable systems.
	
	We may reinterpret this as having subsumed the invariant difference function $\mathfrak t=t-3\phi$ bridging between the representative $(g,\varphi)\in\Phi$ of the Weylian metric and the abundant structure $(M,g,S,t)$.
	Indeed, if $\mathfrak t\ne0$, we may replace~$\Phi$ by a new Weylian metric $\Phi'=[(g,\varphi+\frac13\,d\mathfrak t)]$. In terms of the corresponding Weyl structure this presents itself as a conformal change of the Weylian connection. This observation now allows us to extend abundant structures to pre-fixed Weylian manifolds.
\end{rmk}

In the light of the remark, let the Weylian manifold be fixed, e.g.\ in terms of a Weyl structure $(M,g,D)$. We may then define an \emph{abundant structure} on this Weylian manifold as follows.
\begin{defn}\label{defn:abundant.structure.weylian.mfd}
	Let $(M,\Phi)$ be a Weylian structure. We call $(\hat S,\mathfrak t)$ consisting of a (2,1)-tensor field $\hat S\in\Gamma(\mathrm{Sym}^2_\circ(T^*M)\otimes TM)$ and a function $\mathfrak t\in\mathcal C^\infty(M)$ an \emph{abundant structure} on $(M,\Phi)$ if $(M,[(g,\varphi+\frac13\,d\mathfrak t)],\hat S)$ is an abundant Weylian manifold.
\end{defn}

We hence obtain:
\begin{thm}\label{thm:rebase}
	\begin{enumerate}[label=(\roman*)]
		\item An abundant structure $(\hat S,\mathfrak t)$ on a Weylian manifold $(M,\Phi)$ is a natural abundant Weylian structure precisely if $\mathfrak t$ is constant.
		\item If $(\hat S,\mathfrak t)$ is an abundant structure on the Weylian manifold $(M,\Phi)$, then $(M,[(g,\varphi+\frac13\,d\mathfrak t)],\hat S)$ is a natural abundant Weylian manifold, $(g,\varphi)\in\Phi$.
	\end{enumerate}
\end{thm}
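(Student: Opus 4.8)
The plan is to reduce both parts of the theorem to a single rigidity fact: that the structural equations of Definition~\ref{defn:abudant.structures} pin down the exact $1$-form $\bar t=dt$ as soon as the pair $(g,S)$, with $S=g\hat S$, is fixed. Before invoking this, I would dispose of the formal content. First I would check that the rebased Weylian metric $\Phi'=[(g,\varphi+\tfrac13\,d\mathfrak t)]$ occurring in Definition~\ref{defn:abundant.structure.weylian.mfd} does not depend on the chosen representative $(g,\varphi)\in\Phi$: replacing $(g,\varphi)$ by $(\Omega^2g,\varphi-d\ln|\Omega|)$ moves the pair $(g,\varphi+\tfrac13\,d\mathfrak t)$ by exactly the Weylian move of Definition~\ref{defn:Weyl.structure}, so the two pairs represent the same Weylian metric. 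Granting this, statement~(ii) is immediate from Definition~\ref{defn:abundant.structure.weylian.mfd} itself: that definition says precisely that $(M,\Phi',\hat S)$ is a natural abundant Weylian manifold, and the representative-independence just established shows this holds for every $(g,\varphi)\in\Phi$.

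For part~(i), note that $\Phi$ and $\Phi'$ share the metric $g$, hence the same $S=g\hat S$, and differ only through their Weyl $1$-forms, which obey $3\varphi'=3\varphi+d\mathfrak t$. If $\mathfrak t$ is constant then $d\mathfrak t=0$, so $\Phi'=\Phi$, and the hypothesis that $(M,\Phi',\hat S)$ is natural abundant reads off as $(M,\Phi,\hat S)$ being a natural abundant Weylian structure; this is the forward direction and is purely formal. For the converse I would argue as follows: if $(M,\Phi,\hat S)$ is itself natural abundant, then there exist $t_0,t_1$ with $dt_0=3\varphi$ and $dt_1=3\varphi+d\mathfrak t$ such that both $(g,S,t_0)$ and $(g,S,t_1)$ satisfy the equations of Definition~\ref{defn:abudant.structures} (recall these depend on $t$ only through $\bar t$). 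Everything then hinges on deducing $dt_0=dt_1$, i.e.\ on the rigidity claim.

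The key step is to solve the structural equations for $\bar t$ in terms of $(g,S)$ on the dense open set $U=\{S\neq0\}$. For $n\geq3$ I would contract the $\nabla S$-equation in~\eqref{eq:n>2.conditions} with $S^{ijk}$; the symmetric trace-free projector is absorbed, the left-hand side becomes $\tfrac12\nabla_l|S|^2$, and the pleasant cancellation to verify is that all $\bar t$-contractions built from the square $\Sigma_{kl}=S_{kab}S_l{}^{ab}$ cancel in pairs, leaving exactly $\tfrac13|S|^2\,t_l$ alongside purely cubic-in-$S$ terms. Hence on $U$
\[
 t_l=\frac{3}{|S|^2}\Bigl(\tfrac12\nabla_l|S|^2-(\text{cubic in }S)_l\Bigr),
\]
so $\bar t$ is a function of $(g,S)$ alone; thus $dt_0=dt_1$ on $U$, and by continuity on all of the connected manifold $M$, giving $d\mathfrak t=0$ and $\mathfrak t$ constant, completing~(i). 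For $n=2$ the same contraction of~\eqref{eq:DS.general} with $S$ isolates $\bar t$, now using the two-dimensional identity $S_{iab}S_j{}^{ab}=\tfrac12|S|^2\,g_{ij}$ to evaluate the square.

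The main obstacle is exactly this rigidity, namely showing that the $\bar t$-linear part of the $\nabla S$-equation is non-degenerate wherever $S\neq0$. For $n\geq3$ it rests on the clean collapse of the $\Sigma$-contractions described above; for $n=2$ one must additionally track the $\bar t$-dependence hidden in $\Xi_{ij}=\nabla^aS_{aij}+\tfrac23\,S_{ija}t^a$, which enters~\eqref{eq:DS.general}. A subordinate point is the locus $S=0$, where I would argue by density; this is harmless under the standing irreducibility assumption on $\mathcal K$, which forces $S\not\equiv0$ on every open set. All remaining steps are bookkeeping within the Weylian equivalence relation of Definition~\ref{defn:Weyl.structure}.
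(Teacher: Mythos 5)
Your part (ii) and the forward direction of (i) are fine and coincide with what the paper actually does: part (ii) is definitional once one checks, as you do, that $[(g,\varphi+\frac13\,d\mathfrak t)]$ is representative-independent, and the paper's entire proof of (i) is the one-line observation that the structural equations involve $t$ only through $dt$, so that a constant $\mathfrak t$ changes nothing. In the paper's reading, ``is a natural abundant Weylian structure'' refers to the given datum $(\hat S,\mathfrak t)$ together with its own rebased $1$-form, so the converse only asserts that $3\varphi+d\mathfrak t=3\varphi$ forces $d\mathfrak t=0$, i.e.\ $\mathfrak t$ constant on connected $M$ --- there is no analytic content beyond the ``only derivatives of $t$'' remark.

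Your converse, by contrast, tries to prove a much stronger rigidity statement --- that $(g,S)$ alone determines $\bar t$ --- and this is where the genuine gap lies. The contraction computation itself is correct on $\{S\neq0\}$: pairing the $\nabla S$-equation with $S^{ijk}$, the two $\Sigma$-contractions ($\Sigma_{kl}=S_{kab}S\indices{_l^{ab}}$) do cancel, leaving $\tfrac12\nabla_l|S|^2=\tfrac13|S|^2\,t_l+(\text{cubic in }S)_l$. But your disposal of the locus $S=0$ fails: Definition~\ref{defn:abudant.structures} carries no irreducibility hypothesis at all, and irreducibility of $\mathcal K$ does not force $S\not\equiv0$ --- the paper's own first example, the isotropic harmonic oscillator, is explicitly irreducible with $\hat S=0$ and $\bar t=0$. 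Worse, on $\{S\equiv0\}$ the rigidity is actually false: on a simply connected domain $U\subset\mathbb R^n\setminus\{x_0\}$ with flat $g$ and $S=0$, the first equation of \eqref{eq:n>2.conditions} reduces to $\nabla^2_{ij}t=\tfrac13\left(t_it_j-\tfrac12|\grad_gt|^2g_{ij}\right)$, which is solved both by $t\equiv\mathrm{const}$ and by the non-constant $t=-6\ln|x-x_0|$ (set $f=e^{-t/3}=|x-x_0|^2$ and check $\nabla^2f=\tfrac1{18}e^{-t/3}|\grad_g t|^2 g$ directly). So $(U,[(g,0)],0)$ is a natural abundant Weylian structure while $(0,-6\ln|x-x_0|)$ is an abundant structure on it with non-constant $\mathfrak t$; this is precisely the Stäckel-transform freedom within one conformal class. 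Hence the strengthened converse you are proving is false as stated, and the theorem must be read, as the paper's proof implicitly does, as a statement about the given pair $(\hat S,\mathfrak t)$ rather than about uniqueness of $\bar t$ for fixed $(g,S)$.
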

\begin{proof}
	For part (i), observe that the structural equations~\eqref{eq:n>2.conditions} and~\eqref{eq:n=2.conditions}, respectively, involve only derivatives of the function $t$.
\end{proof}

Recall from Remark~\ref{rmk:Weylian.correspondence} that a Weyl structure can always be modified by a change of the connection $D\to D+\alpha\otimes\mathrm{id}$ for a $1$-form $\alpha\in\Omega^1(M)$. We see the analogous freedom in the second part of Theorem~\ref{thm:rebase}: we can realign the Weyl structure relative to the superintegrable system as long as we preserve exactness of the Weyl structure. The first part of the theorem, however, tells us that, up to the equivalence stated in Definition~\ref{defn:equivalence}, there is a unique natural base alignment of the Weyl structure relative to the superintegrable system, namely the one that makes $\mathfrak t$ a constant.

\subsection{Structural equations for \texorpdfstring{$n\geq3$}{n>=3}}\label{sec:structural.n>2}

Our aim is now to give a representative-free characterisation of abundant structures on Weylian manifolds. To this end, we need to establish some terminology for a given Weyl structure $(M,g,D)$.
We denote by $\Pi_{\mathrm{Sym}^3_0}$ the projection onto the trace-free totally symmetric component, and we introduce conformally adapted operators $\mathscr H:\mathcal C^\infty(M)\to\Gamma(\mathrm{Sym}^2_\circ(T^*M))$,
\begin{equation*}
	\mathscr H(f) = \mathrm{Hess}^{g}(f)-\frac1n\,g\,\Delta^g(f)-\mathring{\mathsf{P}}\,f\,,
\end{equation*}
and the conformal Laplacian 
\begin{equation*}
	\mathscr L({f}) = -4\,\frac{n-1}{n-2}\Delta^g{f}+\mathrm{Scal}^g{f}\,.
\end{equation*}

\begin{rmk}
	To ensure simplicity and accessibility, we apply the usual language of tensor fields and functions on $M$, but mention that an alternative formulation of the structural equations can be put forth in the language of weighted density bundles.
	The (weight-$w$) density line bundle $L^w$ is the bundle whose fibres (at a point $x\in M$) are given by the maps $\ell:\Omega^n(M)\to\R$ such that, for $r\in\R\setminus\{0\}$, $\ell(r\alpha)=|r|^{-\frac{w}{n}}\ell(\alpha)$. Similarly, one introduces the weighted tensor bundles $L^{w,k,k^*}=L^w\otimes (TM)^{k}\otimes(T^*M)^{k^*}$, for $w,k,k^*\in\mathbb{N}_0$. These are said to have weight $w+k-k^*$\cite{CP1999}.
	A (tensor) density $\mu$ of weight $\mathsf{w}$ thus transforms as $\mu\to|\Omega|^\mathsf{w}\mu$ under conformal rescalings.
\end{rmk}

We now introduce the conformally invariant sections $\mathcal P\in\Gamma(\mathrm{Sym}^2_\circ(T^*M))$, $\mathcal L\in\mathcal C^\infty(M)$, for a given Weylian structure $(M,\Phi=[(g,\varphi)])$,
\begin{align*}
	\mathcal P&:=e^{3\varphi}\mathscr H\left(e^{-3\varphi}\right)
	\\
	\mathcal L&:=e^{3(1-\frac{n}{2})\varphi}\mathscr L\left(e^{-3(1-\frac{n}{2})\varphi}\right)\,
\end{align*}
We furthermore introduce the conformally invariant tensor field
\begin{equation*}
	\mathscr S(X,Y) = \tr(\hat S(X,\hat S(Y,-)))
\end{equation*}
and its (conformally invariant) trace-free part
\begin{equation*}
	\mathring{\mathscr S} = \mathscr S-\frac1n\,g\,\tr_g(\mathscr S)\,.
\end{equation*}
Next, we introduce 
\begin{equation*}
	\mathfrak S(X,Y,Z)=\hat S(\hat S(Y,Z),X),
\end{equation*}
for $X,Y,Z\in\mathfrak X(M)$, and then a tensor field $\mathfrak A\in\Gamma(T^*M^{\otimes3}\otimes TM)$ via
\begin{equation*}
	{\mathfrak A}\indices{_{ijk}^l}
	=\frac12\,\Pi_{(ijk)\circ}\left(
	{\mathfrak S}\indices{_{ijk}^l}-\frac4{n-2}\mathscr S_{jk}g\indices{_i^l}
	\right)\,.
\end{equation*}
Note that $\mathfrak A$ is conformally invariant.
Let $\nabla^{\hat S}=\nabla^g-\hat S$ and denote its curvature and Ricci tensor by $R^S$ and $\mathrm{Ric}^S$, respectively. We then define the Weyl tensor $\mathrm{Weyl}^{\hat S}$ of $\nabla^{\hat S}$ (with respect to $g$) by
\[
\mathrm{Weyl}^{\hat S}(X,Y,Z,W)=g(R^{\hat S}(X,Y)Z,W)-(\mathsf{P}^{\hat S}\owedge g)(X,Y,Z,W)\,,
\]
where $\owedge$ is the Kulkarni-Nomizu product, and where $\mathsf{P}^S$ denotes the Schouten tensor of $\nabla^{\hat S}$ (with respect to $g$), i.e.
\[
\mathsf{P}^{\hat S}=\frac{1}{n-2}\left(\mathrm{Ric}^{\hat S}-\frac{\tr_g\mathrm{Ric}^{\hat S}}{2n(n-1)}\,g\right)\,.
\]
We are now ready to characterise abundant Weylian structures.

\begin{thm}\label{thm:abundant-superintegrable}
	Let $(M,\Phi)$ be a flat Weylian structure, $n\geq3$. Then $(M,\Phi,\hat S)$, where $\hat S$ is a $(1,2)$-tensor field, is \emph{abundant}, if and only if, for $\alpha\in\Omega^1(M)$ and $X\in\mathfrak X(M)$,
	\begin{align}
		\mathcal P &= -\frac1{9(n-2)}\,\mathring{\mathscr S}
		\label{eq:invariant.n>2.P}
		\\
		\mathcal L &= -\frac29\frac{3n+2}{n+2}\,|S|^2
		\label{eq:invariant.n>2.L}
		\\
		D_X\hat S &= \mathfrak A(X,\cdot,\cdot)
		\label{eq:invariant.n>2.S}
		\\
		\mathrm{Weyl}^{\hat S} &= 0
		\label{eq:invariant.n>2.Weyl}
	\end{align}
\end{thm}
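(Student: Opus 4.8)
The plan is to turn the stated equivalence into a computation in one well-chosen conformal scale. Both sides are conformally natural: the excerpt records that the structural equations of Definition~\ref{defn:abudant.structures} are conformally invariant under $g\mapsto\Omega^2g$, $S\mapsto\Omega^2S$, $t\mapsto t-3\ln|\Omega|$, and the constraint $dt=3\varphi$ defining abundance on a Weylian manifold is itself preserved under this rescaling, since $\varphi\mapsto\varphi-d\ln|\Omega|$. On the other side, $\mathcal P$, $\mathscr S$, $\mathfrak A$, the scalar $\mathcal L$, and the connection $\nabla^{\hat S}=\nabla^g-\hat S$ together with its curvature are, by construction, conformally covariant of definite weight. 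Hence the whole equivalence may be verified in any representative $(g,\varphi)\in\Phi$. Because abundance forces $\varphi=\tfrac13\,dt$ to be (locally) exact, I would pass to the exact gauge $\varphi\equiv0$; there $t$ is constant, every $t$-derivative vanishes, each density prefactor $e^{c\varphi}$ becomes $1$, and the Weyl connection $D$ reduces to $\nabla^g$.

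In this gauge the matching of three of the four equations is transparent. The scalar prefactors collapse to $\mathcal P=e^{3\varphi}\mathscr H(e^{-3\varphi})=\mathscr H(1)=-\mathring{\mathsf P}$ and $\mathcal L=e^{3(1-\frac n2)\varphi}\mathscr L(e^{-3(1-\frac n2)\varphi})=\mathscr L(1)=\mathrm{Scal}^g$, while the Hessian equation (the first line of~\eqref{eq:n>2.conditions}) loses its $t_it_j$ and $|\grad_gt|^2$ terms and becomes an identity expressing $\mathsf P$ through $S$. Splitting it into trace-free and pure-trace parts yields exactly~\eqref{eq:invariant.n>2.P} (identifying $\mathring{\mathsf P}$ with a multiple of $\mathring{\mathscr S}=\Pi_\circ(S_i{}^{ab}S_{jab})$) and, after simplification,~\eqref{eq:invariant.n>2.L} (using $\tr_g\mathsf P=\tfrac{R}{2(n-1)}$ and factoring the $|S|^2$-coefficient, whose bracket $1+\tfrac{n(n-6)}{2(n-1)(n+2)}$ equals $\tfrac{(3n+2)(n-2)}{2(n-1)(n+2)}$). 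Likewise the derivative equation for $\nabla_lS_{ijk}$ sheds its $t$-terms and becomes $\nabla^g\hat S=\mathfrak A$, which is precisely~\eqref{eq:invariant.n>2.S} because $D=\nabla^g$ here. Throughout I would keep close track of conformal weights in order to confirm that the numerical constants agree in the paper's normalisation.

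The substantial step is the identification of~\eqref{eq:invariant.n>2.Weyl} with the algebraic structural equation $\Pi_\circ(g^{ab}S_{aik}S_{bjl}-g^{ab}S_{ail}S_{bjk})=0$. For this I would expand the curvature of $\nabla^{\hat S}=\nabla^g-\hat S$,
\[
R^{\hat S}(X,Y)Z=R^{g}(X,Y)Z-(\nabla^{g}_X\hat S)(Y,Z)+(\nabla^{g}_Y\hat S)(X,Z)+\hat S(X,\hat S(Y,Z))-\hat S(Y,\hat S(X,Z)),
\]
and then substitute the already-established relation $\nabla^g\hat S=\mathfrak A$ to turn the two covariant-derivative terms into purely algebraic expressions in $S$. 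Passing to the totally trace-free (Weyl) part and using flatness of the conformal structure, so that $\mathrm{Weyl}^{g}=0$ and $R^{g}$ enters only through its Schouten/Ricci traces, the surviving trace-free combination of the quadratic $\hat S\cdot\hat S$-terms should be exactly the stated commutator. The main obstacle is this curvature calculation: one must check that the antisymmetrised $\mathfrak A$-contributions recombine with the quadratic terms into $\Pi_\circ(g^{ab}S_{aik}S_{bjl}-g^{ab}S_{ail}S_{bjk})$, and, for the converse direction, that the non-trace-free part of $R^{\hat S}$ (the Schouten tensor $\mathsf P^{\hat S}$) is already pinned down by~\eqref{eq:invariant.n>2.P}--\eqref{eq:invariant.n>2.L}, so that $\mathrm{Weyl}^{\hat S}=0$ contributes the third structural equation and nothing more. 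A final point deserving care is $n=3$, where $\mathrm{Weyl}$ vanishes identically; there the algebraic equation must instead be recovered from the appropriate Cotton-type remnant, consistent with the special status of three-dimensional systems noted in Section~\ref{sec:introduction}.
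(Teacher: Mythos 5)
Your strategy coincides with the paper's: the published proof is a two-line assertion that \eqref{eq:invariant.n>2.P}--\eqref{eq:invariant.n>2.Weyl} are ``obviously equivalent'' to the structural equations \eqref{eq:n>2.conditions}, read off representative by representative with $t$ tied to a potential of $\varphi$. Your gauge-fixing to the exact representative $\varphi\equiv 0$ (where $dt=3\varphi$ forces $t$ to be constant and $D=\nabla^g$) is exactly the distinguished-scale version of that check, legitimised by the conformal invariance the paper records, and it supplies the verification the paper omits. Your key computations are sound: the bracket in the trace part does factor as $3n^2-4n-4=(3n+2)(n-2)$, the expansion of $R^{\hat S}$ for $\nabla^{\hat S}=\nabla^g-\hat S$ is correct, and once \eqref{eq:invariant.n>2.S} holds the antisymmetrised derivative terms drop out (since $\mathfrak A$ is $\Pi_{(ijk)\circ}$-symmetrised), leaving $gR^{\hat S}=\mathsf{P}\owedge g+Q$ with $Q_{ijkl}=g^{ab}\left(S_{aik}S_{bjl}-S_{ajk}S_{bil}\right)$, whose trace-free part is the third structural equation, as you predict.

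Two caveats. First, your $n=3$ worry is a misdiagnosis rather than a gap, and the Cotton-type rescue you propose would not work because there is nothing to rescue: $Q$ is a Gauss-type algebraic curvature tensor (antisymmetric in both pairs, pair-symmetric, satisfying the first Bianchi identity), so its totally trace-free part vanishes identically in dimension three; correspondingly, given \eqref{eq:invariant.n>2.S} and conformal flatness, $\mathrm{Weyl}^{\hat S}=0$ is automatic for $n=3$, and \emph{both} sides of that part of the equivalence are vacuous there. Second, your hedge about constants is warranted, and your gauge computation actually exposes mismatches with the printed normalisations: in the gauge $t=\mathrm{const}$ the trace of the first equation of \eqref{eq:n>2.conditions} gives $R=-\frac19\frac{3n+2}{n+2}|S|^2$, while $\mathcal L=\mathscr L(1)=R$, so \eqref{eq:invariant.n>2.L} is off by a factor of $2$; likewise, with the printed sign of $\mathscr H$ (the $-\mathring{\mathsf{P}}f$ term) one gets $\mathcal P=-\mathring{\mathsf{P}}=+\frac1{9(n-2)}\mathring{\mathscr S}$, opposite in sign to \eqref{eq:invariant.n>2.P}. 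These discrepancies sit in the paper's conventions (its proof contains no computation that would detect them), not in your method, but your claim that the split into trace-free and trace parts ``yields exactly'' \eqref{eq:invariant.n>2.P}--\eqref{eq:invariant.n>2.L} cannot stand as written: a complete write-up must either flip the sign in $\mathscr H$ (to the standard almost-Einstein operator with $+\mathring{\mathsf{P}}f$) and replace $-\frac29$ by $-\frac19$, or adjust the invariant equations accordingly.
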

Note that \eqref{eq:invariant.n>2.P}--\eqref{eq:invariant.n>2.Weyl} are conformally invariant conditions, i.e.\ that they hold true also after a conformal rescaling.
\begin{proof}
	The conditions \eqref{eq:invariant.n>2.P}--\eqref{eq:invariant.n>2.Weyl} are obviously equivalent to \eqref{eq:n>2.conditions}.
	Hence, for each $(g,\varphi=d\phi)\in\Phi$, $(M,g,g\hat S,t)$, $t=\frac13\phi$, defines an abundant manifold. Hence, $(M,\Phi,\hat S)$ defines an abundant Weylian manifold.
\end{proof}

\noindent An analogous statement holds for abundant structures (with non-vanishing $\mathfrak t$) on flat Weylian structures, the details of which we leave to the interested reader.

\subsection{Structural equations for \texorpdfstring{$n=2$}{n=2}}\label{sec:structural.n=2}

Structural equations can also be written down, in conformally invariant form, for $2$-dimensional abundant Weylian structures.
We introduce the tensor section $\mathfrak Z\in\Gamma(\mathrm{Sym}^2_\circ(T^*M)\otimes T^*M\otimes TM)$ via
\[
\mathfrak Z_{ijl}^k = g^{ka}\Pi_{(ija)\circ}\left( \Xi_{ij}g_{al}\right)
\]
and observe that it is conformally invariant. Moreover, we introduce $\mathfrak b\in\Gamma(\mathrm{Sym}^2_\circ(T^*M)\otimes T^*M)$ by
\[
\mathfrak b_{ijk} = \Pi_{(ij)\circ} \beta_{i}g_{jk}\,,
\]
which we likewise observe to be conformally invariant.
Reconsidering the relevant equations in Definition~\eqref{defn:abudant.structures}, we now find the following theorem.
\begin{thm}
	Let $(M,\Phi)$ be a flat Weylian structure, $n=2$. Then $(M,\Phi,\hat S)$, where $\hat S$ is a $(1,2)$-tensor field, is \emph{abundant}, if and only if, for $X\in\mathfrak X(M)$,
	\begin{align*}
		D_X\hat S
		&=  \mathfrak Z(\cdot,\cdot,X)
		\\
		D_X\Xi
		&= \frac{|S|_g^2}{3}S(X,\cdot,\cdot)
		+\frac43 \mathfrak b(\cdot,\cdot,X)
		\\
		\mathcal L^\mathrm{2D}
		&= \frac19\,|S|^2
	\end{align*}
	where we introduce
	\[
	\mathcal L^\mathrm{2D}:=\left( \Delta-\frac32R \right)t\,.
	\]
\end{thm}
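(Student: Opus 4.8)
The plan is to mirror exactly the strategy used in the proof of Theorem~\ref{thm:abundant-superintegrable} for $n\geq3$: show that the three displayed conformally invariant equations are term-by-term equivalent to the $n=2$ structural equations~\eqref{eq:n=2.conditions} of Definition~\ref{defn:abudant.structures}, once one fixes a representative $(g,\varphi=d\phi)\in\Phi$ and sets $t=\tfrac13\phi$ (equivalently $dt=\varphi$, as required by Definition~\ref{defn:weylian.abundant.structure}). Since the structural equations~\eqref{eq:n=2.conditions} depend on $t$ only through $t_i=\nabla_i t$, and since $\mathfrak Z$, $\mathfrak b$, $\mathscr S$ and $\mathcal L^{\mathrm{2D}}$ have all been observed above to be conformally invariant, establishing the equivalence for one representative suffices to establish it for all, whence $(M,\Phi,\hat S)$ is an abundant Weylian manifold in the sense of Definition~\ref{defn:weylian.abundant.structure}.

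First I would match~\eqref{eq:DS.general} with the first displayed equation $D_X\hat S=\mathfrak Z(\cdot,\cdot,X)$. The key point is that $D=\nabla^{(g,\varphi)}$ differs from the Levi-Civita connection $\nabla^g$ by the $\varphi$-dependent terms recorded in Remark~\ref{rmk:Weylian.correspondence}; writing $D\hat S$ in terms of $\nabla^g\hat S$ produces precisely the $-\tfrac23 S_{ijk}t_l+2t_iS_{jkl}$ terms (after raising/lowering and using $dt=\varphi$), while $\mathfrak Z_{ijl}^k=g^{ka}\Pi_{(ija)\circ}(\Xi_{ij}g_{al})$ reproduces the $\Xi_{ij}g_{kl}$ term modulo the symmetrisation projector $\Pi_{(ijk)\circ}$. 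Second I would match the $\Xi$-equation $D_X\Xi=\tfrac{|S|_g^2}{3}S(X,\cdot,\cdot)+\tfrac43\mathfrak b(\cdot,\cdot,X)$ with~\eqref{eq:DXi.general}, again converting $D\Xi$ into $\nabla^g\Xi$ plus Weyl connection terms and checking that the $t$-linear correction absorbs the $\tfrac43\Pi_{(ijk)\circ}\Xi_{jk}t_i$ contribution, with $\mathfrak b_{ijk}=\Pi_{(ij)\circ}\beta_i g_{jk}$ matching the $\beta$-term. Third, the scalar equation $\mathcal L^{\mathrm{2D}}=\tfrac19|S|^2$ must be reconciled with~\eqref{eq:DDt.general}: taking the $g$-trace of~\eqref{eq:DDt.general} collapses the trace-free $\Pi_{(ij)\circ}$ pieces and, after substituting the definition of $\tau$ and using $\tr_g\Xi=0$ (since $\Xi\in\Gamma(\mathrm{Sym}^2_\circ)$), reduces to $(\Delta-\tfrac32 R)t=\tfrac19|S|^2$, which is exactly $\mathcal L^{\mathrm{2D}}=\tfrac19|S|^2$.

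I expect the main obstacle to be bookkeeping rather than conceptual: the $n=2$ system~\eqref{eq:n=2.conditions} is comparatively intricate, carrying four coupled equations with the auxiliary fields $\Xi,\tau,\beta$ whose own definitions involve $\nabla^2 t$, $S_{ija}t^a$ and quadratic-in-$t$ terms. The delicate part is verifying that the trace-free equation~\eqref{eq:divTau.general} for $\nabla^a\tau_{ak}$ is not an independent condition but is already implied by the other three together with the definitional constraints on $\tau$ and $\beta$ — this is presumably why it does not appear as a fourth displayed equation in the theorem. I would therefore devote attention to showing that~\eqref{eq:divTau.general} follows as an integrability (divergence) consequence of the $\Xi$-equation and the Hessian equation~\eqref{eq:DDt.general}, so that the three displayed conditions genuinely capture the full content of~\eqref{eq:n=2.conditions}. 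Once this redundancy is confirmed, the remaining work is the routine substitution of the Weyl-connection correction terms and the verification that each conformally invariant object reduces, in a fixed gauge, to the corresponding right-hand side; the conclusion then follows exactly as in the proof of Theorem~\ref{thm:abundant-superintegrable}.
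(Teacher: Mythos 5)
Your proposal follows essentially the same route as the paper, which gives no formal proof of this theorem beyond the remark that the displayed conditions restate the relevant structural equations \eqref{eq:n=2.conditions} for a fixed representative $(g,\varphi)\in\Phi$ (exactly mirroring the one-line proof of Theorem~\ref{thm:abundant-superintegrable}) together with the explicit check, carried out after the statement, that $\mathcal L^{\mathrm{2D}}$ transforms with conformal weight $-2$. Your plan to show that \eqref{eq:divTau.general} is redundant is the right instinct and goes beyond what the paper records: since $\tau$ is defined from $\nabla^2t$, $S$, $t$ and $\Xi$, the divergence $\nabla^a\tau_{ak}$ is completely determined by \eqref{eq:DS.general}, \eqref{eq:DXi.general} and the trace of \eqref{eq:DDt.general} (the trace-free part of \eqref{eq:DDt.general} being definitional), with the two-dimensional identity $S\indices{_i^{ab}}S_{jab}=\tfrac12\,|S|_g^2\,g_{ij}$ collapsing the quadratic terms — a verification the paper passes over in silence by writing only three invariant conditions.
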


These conditions are conformally invariant.
Note that $\mathcal L^\mathrm{2D}$ behaves like a density of weight $q=-2$, i.e.\ $\mathcal L^\mathrm{2D}\mapsto \Omega^{-2}\mathcal L^\mathrm{2D}$, under conformal changes $g\mapsto\Omega^2g$.
Indeed, letting $t'=t-\ln|\Omega|$,
\begin{align*}
	\Delta t
	&\mapsto \Omega^{-2}\left[
	\Delta t'-2\,dt'(\grad_g\ln|\Omega|)+2\,g^{-1}(d\ln|\Omega|,dt')
	\right]\,,
	\\
	R &\mapsto \Omega^{-2}\left[ R-2\Delta\ln|\Omega| \right]\,,
	\\
	\intertext{and hence}
	\Delta t-\frac32\,R
	&\mapsto \Omega^{-2}\left[ \Delta (t-3\ln|\Omega|)-\frac32\,(R-2\,\Delta\ln|\Omega|) \right]
	\\
	&= \Omega^{-2}\left[ \Delta t-3\Delta\ln|\Omega|-\frac32\,R +3\,\Delta\ln|\Omega| \right]
	= \Omega^{-2}\left[ \Delta t-\frac32\,R \right]\,.
\end{align*}

\subsection{Semi-Weyl manifolds}\label{sec:semi-weylian}

Re-examining the geometric structure of abundant superintegrable systems, we have explicitly described the underlying Weyl structure. This allowed us to encode, on a Weyl structure $(M,c,D)$, conformal classes of abundant structures in an invariant tensor field $\hat S\in\Gamma(\mathrm{Sym}^2_\circ(T^*M)\otimes TM)$, corresponding for $g\in c$ to a totally symmetric and trace-free tensor field $S:=g\hat S\in\Gamma(\mathrm{Sym}^3_\circ(T^*M))$.
Consider an abundant structure $(M,g,S,t)$. Then the corresponding natural Weyl structure is given by the representative $(g_0,0)$ where $g_0=e^{-\frac23 t}$. Indeed, recalling Lemma~\ref{lem:construction.Weyl.structure}, a representative of the Weylian metric is given by
\[
	\left( \Omega^2g,-\frac13\,dt-d\ln|\Omega| \right)
\]
where $\Omega$ is a smooth function. Asking that $dt-d\ln|\Omega|=0$ (we recall that the Weylian manifold is exact), we obtain $\Omega^2=\exp(-\frac23t)$, modulo multiplication by an irrelevant constant. It follows that, letting $g_0:=e^{-\frac23 t}g$,
\[
	(g_0,0)\sim\left(g,-\frac13 dt\right).
\]
We hence have
\[
	Dg_0=0
\]
where $D$ is the Weyl connection associated to the Weylian metric $\Phi:=[(g,-\frac13 dt)]$, compare Definition~\ref{defn:Weyl.structure}. This shows:
\begin{thm}\label{thm:Weyl.structure.LC.g0}
	The natural abundant Weylian structure (cf.\ Definition~\ref{defn:weylian.abundant.structure}) of an abundant structure $(M,g,S,t)$ is precisely the Levi-Civita connection of the metric $g_0=\exp(-\frac23 t)g$.
\end{thm}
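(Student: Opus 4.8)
The plan is to exploit the \emph{exactness} of the Weylian structure underlying an abundant structure in order to pass to a gauge in which the Weyl connection is metric. Following the discussion preceding the theorem, the natural abundant Weylian structure attached to $(M,g,S,t)$ is the Weylian metric $\Phi=[(g,-\tfrac13\,dt)]$, whose $1$-form part $-\tfrac13\,dt$ is exact with potential $-\tfrac13\,t$; in particular $\Phi$ is an exact Weylian structure (cf.\ Definition~\ref{defn:weylian.abundant.structure} for the general notion). First I would write down the family of representatives $(\Omega^2g,\,-\tfrac13\,dt-d\ln|\Omega|)$ of $\Phi$, using the equivalence relation of Definition~\ref{defn:Weyl.structure}.

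Next I would impose the gauge condition that the $1$-form part vanish, namely $-\tfrac13\,dt-d\ln|\Omega|=0$. Because $\Phi$ is exact, this equation is solvable; it gives $d\ln|\Omega|=-\tfrac13\,dt$, hence $\Omega^2=\exp(-\tfrac23\,t)$ up to an irrelevant multiplicative constant. Setting $g_0:=\exp(-\tfrac23\,t)\,g$, one obtains the distinguished representative $(g_0,0)\in\Phi$.

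Finally I would invoke the correspondence of Remark~\ref{rmk:Weylian.correspondence}: the Weyl connection $D$ of $\Phi$ is $\nabla^{(g_0,0)}$, and since the $1$-form of the representative $(g_0,0)$ vanishes, the defining relation of that remark yields $\nabla^{(g_0,0)}g_0=0$, i.e.\ $Dg_0=0$, cf.\ \eqref{eq:Weylian.connection}. As every Weyl connection is torsion-free and $D$ is compatible with $g_0$, uniqueness of the Levi-Civita connection forces $D$ to be the Levi-Civita connection of $g_0$; since $D$ depends only on $\Phi$ and not on the chosen representative, this proves the claim.

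The computation is essentially routine, and I do not expect a genuine obstacle. The only points requiring care are the bookkeeping of signs and conformal weights when relating the abundant-structure datum $t$ to the $1$-form $\varphi$ of $\Phi$ (which must respect both the transformation $t\mapsto t-3\ln|\Omega|$ of abundant structures and the equivalence $(g,\varphi)\sim(\Omega^2g,\varphi-d\ln|\Omega|)$), and the appeal to exactness, which is precisely what guarantees that the gauge condition admits a global solution $\Omega$. Neither constitutes a substantive difficulty, so the crux is simply ensuring that the identification of $\Phi$ with the exact representative $(g,-\tfrac13\,dt)$ is carried out consistently.
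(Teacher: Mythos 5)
Your proposal is correct and follows essentially the same route as the paper: you take the representative $(g,-\tfrac13\,dt)$ of the natural Weylian metric, gauge away the $1$-form by solving $-\tfrac13\,dt-d\ln|\Omega|=0$ (giving $\Omega^2=e^{-\frac23 t}$), obtain the distinguished representative $(g_0,0)$ with $Dg_0=0$, and conclude by uniqueness of the Levi-Civita connection. The only differences are cosmetic: you write the gauge condition with the correct sign (the paper's text contains the typo ``$dt-d\ln|\Omega|=0$'') and you make explicit the torsion-free-plus-metric uniqueness step that the paper leaves implicit.
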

We remark that, in the terminology of \cite{KSV2024}, the metric $g_0$ is the metric of the ``system in standard scale'' associated to the system corresponding to the abundant structure $(M,g,S,t)$ of the theorem.
Changing our vantage point, we will now explain how this allows us to conceptualise the abundant structure as a semi-Weyl manifold. We find the following definition in~\cite{Matsuzoe2010}.
\begin{defn}
	A semi-Weyl structure is a quadruple $(M,g,\bar D,\theta)$ consisting of a Riemannian manifold $(M,g)$, a torsion-free affine connection $D$ on $M$, and a $1$-form $\theta\in\Omega^1(M)$, such that
	\[
		\bar Dg+\theta\otimes g\in\Gamma(\mathrm{Sym}^3(T^*M))
	\]
	is totally symmetric.
\end{defn}
Consider an abundant structure $(M,g,S,t)$ as before and set $g_0=\exp(-\frac23\,t)$, as suggested by Theorem~\ref{thm:Weyl.structure.LC.g0}. We denote the Levi-Civita connection of the metric~$g_0$ by~$D$.
We then define
\[
	\bar D_XY:=D_XY+\hat S(X,Y),
\]
for any $X,Y\in\mathfrak X(M)$, where $\hat S=g^{-1}S$.
A short computation yields
\[
	\bar Dg_0=-2(g_0\hat S),
\]
since
$ Z(g_0(X,Y)) = g_0(D_ZX,Y)+g_0(X,D_ZY) $
for $X,Y,Z\in\mathfrak X(M)$.
Thereby, the abundant structure $(M,g,S,t)$ gives rise to the semi-Weyl structure with data $(M,g_0,\bar D,0)$.
More generally, we obtain the following theorem.
\begin{thm}
	An abundant structure $(M,g,S,t)$ has an underlying semi-Weyl structure $(M,g,\bar D,-\frac23\,dt)$. In particular, denoting by $\nabla^0$ the Levi-Civita connection of $\exp(-\frac23 t)g=:g_0$, let
	\[
		\bar D:=\nabla^0+\hat S
	\]
	where $\hat S=g^{-1}S$.
	For $h=\Omega^2 g$ another metric in the same conformal class as~$g$, it then follows that $(M,h,\bar D,\theta:=-\frac23 dt-2d\ln|\Omega|)$ is a semi-Weyl structure, and
	\[
		\bar D_Zh(X,Y)+\theta(Z)h(X,Y) \in\Gamma(\mathrm{Sym}^3(T^*M)).
	\]
\end{thm}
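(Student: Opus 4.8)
The plan is to verify the defining condition of a semi-Weyl structure directly, first for the representative $g$ and then propagating the outcome to an arbitrary conformal representative $h=\Omega^2g$ via the Leibniz rule for the fixed connection $\bar D$. Before touching the metric condition I would record that $\bar D=\nabla^0+\hat S$ is torsion-free: the Levi-Civita connection $\nabla^0$ is torsion-free, and $\hat S(X,Y)=\hat S(Y,X)$ since $S=g\hat S$ is totally symmetric, so the two contributions to the torsion cancel. This secures the structural requirement on the connection, leaving only the symmetry of $\bar Dg+\theta\otimes g$ to establish.

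The central computation is $\bar Dg$. Since $\nabla^0$ is the Levi-Civita connection of $g_0=e^{-\frac23 t}g$ we have $\nabla^0g_0=0$; writing $g=e^{\frac23 t}g_0$ and applying the Leibniz rule yields $\nabla^0 g=\frac23\,dt\otimes g$. The $\hat S$-term contributes $-\bigl(g(\hat S(Z,X),Y)+g(X,\hat S(Z,Y))\bigr)=-2\,S(Z,X,Y)$, using $g(\hat S(\cdot,\cdot),\cdot)=S$ and the symmetry of $S$. Hence $\bar Dg=\frac23\,dt\otimes g-2S$, so that $\bar Dg+(-\frac23\,dt)\otimes g=-2S$, which is totally symmetric because $S\in\Gamma(\mathrm{Sym}^3_\circ(T^*M))$. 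This already settles the first assertion, that $(M,g,\bar D,-\frac23\,dt)$ is a semi-Weyl structure.

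For a general representative $h=\Omega^2g$ I would exploit that $\bar D$ is held fixed while both the metric and the $1$-form change. The Leibniz rule gives $\bar Dh=d(\Omega^2)\otimes g+\Omega^2\,\bar Dg=2\Omega^2\,d\ln|\Omega|\otimes g+\Omega^2\,\bar Dg$, using $d(\Omega^2)=2\Omega^2\,d\ln|\Omega|$. Substituting the expression for $\bar Dg$ from the previous step and adding $\theta\otimes h=(-\frac23\,dt-2\,d\ln|\Omega|)\otimes\Omega^2g$, the pure-trace terms, i.e.\ those proportional to $g(X,Y)$, cancel precisely, leaving $\bar Dh+\theta\otimes h=-2\Omega^2S=-2\,(h\hat S)$. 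As this is a scalar multiple of $S$ it is totally symmetric, which is exactly the semi-Weyl condition for $(M,h,\bar D,\theta)$.

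I expect no genuine obstacle: the argument is a bookkeeping calculation in the conformal factors. The only point requiring care is the exact cancellation of the metric-proportional (trace) part in the last step; this is what forces the $1$-form to be $\theta=-\frac23\,dt-2\,d\ln|\Omega|$ and is the conceptual content of the statement. As a consistency check one notes that $\Omega^2=e^{-\frac23 t}$ recovers $h=g_0$ and $\theta=0$, reproducing the already-established identity $\bar Dg_0=-2\,(g_0\hat S)$ and confirming that the natural Weyl representative $(g_0,0)$ is precisely the one for which the semi-Weyl $1$-form vanishes.
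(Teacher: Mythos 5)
Your proof is correct and takes essentially the same route as the paper's: the paper normalises without loss of generality to $g=g_0$, $t=0$ and performs the same Leibniz-rule bookkeeping using $\bar Dg_0=-2(g_0\hat S)$, relegating the general case to ``analogously'' --- which is precisely the general-gauge computation you carry out explicitly via $\nabla^0 g=\frac23\,dt\otimes g$ and $\bar Dg+\bigl(-\frac23\,dt\bigr)\otimes g=-2S$, followed by the conformal propagation to $h=\Omega^2 g$. Your explicit verification that $\bar D=\nabla^0+\hat S$ is torsion-free (required by the definition of a semi-Weyl structure, and left implicit in the paper) and your consistency check at $\Omega^2=e^{-\frac23 t}$ are welcome additions but do not change the substance of the argument.
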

\begin{proof}
	Without loss of generality, let $g=g_0$, such that $h=\Omega^2g_0$ and $t=0$. A straightforward computation shows
	\begin{multline*}
		\bar D_Z(\Omega^2g_0)-2\left(d\ln|\Omega|\right)(Z)\,\Omega^2\,g_0(X,Y)
		\\
		= 2\Omega\,d\Omega(Z)g_0(X,Y)+\Omega^2\bar D_Zg_0(X,Y)  -2\left(d\ln|\Omega|\right)(Z)\,\Omega^2\,g_0(X,Y)
		\\
		= -2\Omega^2 (g_0\hat S)(X,Y,Z).
	\end{multline*}
	The general case is proven analogously.
\end{proof}

We have hence found that the data of $(M,g,S,t)$ can alternatively be encoded in the semi-Weyl structure $(M,g,\bar D,-\frac23 dt)$ where we identify abundant structures $(g,S,t)\sim(g,S,t')$ when $t-t'$ is a constant, cf.\ Definition~\ref{defn:equivalence}.
We also mention that this endows $(M,g_0,g_0g^{-1}S)$ with the structure of a statistical manifold, cf.\ \cite{Matsuzoe2010}.
	
We conclude the paper with two examples.

\begin{ex}
	Consider the Riemannian metric $g=\sum_{i=1}^n dx^i\otimes dx^i$ on $M=\mathbb R^n$.
	We denote its Levi-Civita connection by $\nabla^g$.
	The non-degenerate harmonic oscillator system \cite{KKM2018} is given by the family of potentials
	\[
		V_\mu = \mu_0\sum_{i=1}^n (x^i)^2+\sum_{i=1}^n \mu_ix^i + \mu_{n+1},
	\]
	where $\mu=(\mu_0,\mu_1,\dots,\mu_{n+1})\in\mathbb R^{n+2}$.
	It is an abundant non-degenerate irreducible conformally superintegrable system with the structure tensors given by, cf.~\cite{KSV2023},
	\[
		T\indices{_{ij}^k}=0\,,\qquad \tau_{ij}=0\,.
	\]
	It follows that $\bar t=0$ and $\hat S=0$, and hence the natural abundant Weylian structure is 
	\[
		(M,\Phi,\hat S)=(\mathbb R^n,[(g,0)],0)\,,
	\]
	which corresponds to the Weyl structure $(\mathbb R^n,[g],\nabla^g)$.
	The associated semi-Weyl structure is thus given by
	\[
		(\mathbb R^n,g,\nabla^g,0)\,.
	\]
\end{ex}

\begin{ex}
	Consider the Riemannian metric $g=\sum_{i=1}^n dx^i\otimes dx^i$ on $M=\mathbb R_+^n$, with Levi-Civita connection $\nabla^g$.
	The Smorodinski-Winternitz system is given by the family of potentials
	\[
		V_\mu = \mu_0\sum_{i=1}^n (x^i)^2+\sum_{i=1}^n \frac{\mu_i}{(x^i)^2} + \mu_{n+1},
	\]
	where $\mu=(\mu_0,\mu_1,\dots,\mu_{n+1})\in\mathbb R^{n+2}$.
	It is an abundant non-degenerate irreducible conformally superintegrable system with the structure tensors given by
	\[
		T\indices{_{ij}^k}=\Pi_{(ij)\circ}C\indices{_{ij}^k}\,,\qquad \tau_{ij}=0,
	\]
	where
	\[
		C=-\sum_{i=1}^n\frac{3}{x^i} dx^i\otimes dx^i\otimes \frac{\partial}{\partial x^i}\,.
	\]
	It follows that $t=-\frac{3}{n+2}\sum_{i=1}^n \ln(x^i)$, up to an irrelevant constant \cite{KSV2023}. Denoting the Kronecker delta by $\delta_{ij}$, we then define $S\in\Gamma(\mathrm{Sym}^3_\circ(T^*M))$ by
	\[
		S_{ijk}:=-\frac{3}{x^i}\delta_{ij}\delta_{jk}
			+\frac{3}{n+2}\frac{1}{x^i}\delta_{jk}
			+\frac{3}{n+2}\frac{1}{x^j}\delta_{ik}
			+\frac{3}{n+2}\frac{1}{x^k}\delta_{ij}
	\]
	and subsequently introduce $\hat S\in\Gamma(\mathrm{Sym}^2_\circ(T^*M)\otimes TM)$ by virtue of the inverse metric $g^{-1}$.
	The associated natural abundant Weylian structure of the Smorodinski-Winternitz system thus is 
	\[
		\left( \mathbb R_+^n,\left[\left(g,-\frac{3}{n+2}\sum_i\frac{dx^i}{x^i}\right)\right], \hat S\right)\,.
	\]
	It corresponds to the Weyl structure $(\mathbb R_+^n,[g],D)$ where $D$ is the Levi-Civita connection of
	\[
		\exp\left(-\frac{2}{n+2}\sum_j\ln(x^j)\right)\,g
		=\left(\prod_{j=1}^{n}x^j\right)^{-\frac{2}{n+2}}\,\sum_{i=1}^ndx^i\otimes dx^i.
	\]
	The underlying semi-Weyl structure is finally obtained as
	\[
		\left( \mathbb R_+^n,g,D+\hat S,\frac{2}{n+2}\sum_{i=1}^n\frac{dx^i}{x^i} \right)\,.
	\]
\end{ex}
	
\section*{Acknowledgements}
The author thanks John Armstrong and Vicente Cortés for discussions.
This research was funded by the German Research Foundation (\emph{Deutsche Forschungsgemeinschaft}) through the Research Grant 540196982. The author also thanks the \emph{Forschungsfonds} of the Department of Mathematics at the University of Hamburg for support. 

\printbibliography
	
\end{document}